\def\br{\mathbb{R}} 
\def\bc{\mathbb{C}} 
\def\h{\mathcal{H}} 
\def\k{\mathcal{K}}
\newcommand{\inner}[2]{\langle #1,#2\rangle} 
\newcommand{\bh}{{\rm B}(\mathcal{H})}
\newcommand{\bk}{{\rm B}(\mathcal{K})}
\newcommand{\tr}{{Tr\,}}
\newtheorem{theorem}{Theorem}[section]
\newtheorem{lemma}[theorem]{Lemma}
\newtheorem{co}[theorem]{Corollary}
\newtheorem{pr}[theorem]{Proposition}
\theoremstyle{remark}
\newtheorem{re}[theorem]{Remark}
\theoremstyle{definition}
\numberwithin{equation}{section}
\begin{document}

\title[]{Sums of products of positive operators and spectra of L\"uders operators} 
\author{Bojan Magajna} 
\address{Department of Mathematics\\ University of Ljubljana\\
Jadranska 21\\ Ljubljana 1000\\ Slovenia}
\email{Bojan.Magajna@fmf.uni-lj.si}

\thanks{Acknowledgment. The author is very grateful to professor Heydar Radjavi for
the discussion concerning sums of projections and positive operators. He is also grateful
to Roman Drnov\v sek for bringing the reference \cite{W} to his attention.}


\keywords{positive operators, commutators, quantum operations}

\subjclass[2010]{Primary 47A05, 47B47; Secondary 47N50, 81P45}

\begin{abstract}Each bounded operator $T$ on an infinite dimensional Hilbert space 
$\h$  is a sum of three operators that are
similar to positive operators; two such operators are sufficient if $T$ is not a compact
perturbation of a scalar. The spectra of L\"uders operators 
(elementary operators on $\bh$ with positive coefficients) of
lengths at least three are not necessarily contained in $\br^+$. On the other hand, the
spectra of such operators of lengths (at most) two are contained in $\br^+$ if
the coefficients on one side commute.
\end{abstract}

\maketitle

\section{Introduction}

Completely positive maps on $\bh$ (the algebra of all bounded operators
on a  Hilbert space $\h$) of the form
\begin{equation}\label{01}\Psi(X)=\sum_{j=1}^{n} A_j^*XA_j,\end{equation}
have received a renewed  interest recently especially in connection with quantum information theory 
(see  \cite{K},  \cite{LZ}, \cite{Pr2},
\cite{WJ}  and the references there). If all the coefficients 
$A_j$
in (\ref{01}) are positive operators
such a map is called a L\"uders operation. If $n$ is finite then these are special cases of 
elementary operators, that is,
maps of the form $X\mapsto\sum_{j=1}^n A_jXB_j$, whose spectra have been
intensively studied in the past (see \cite{Cur} and the references there), 
but only in the cases when both families of coefficients $(A_j)$ and $(B_j)$ are 
commutative. If $\h$ is finite dimensional, then $\bh$ is a Hilbert space for the inner
product induced by the trace  and it is easily verified that an elementary operator with
positive coefficients $A_j$ and $B_j$ is a positive operator on this Hilbert space, so
its spectrum is contained in $\br^+:=[0,\infty)$. 

At the end of
the paper \cite{N} it was asked if the spectrum of a L\"uders operator 
$X\mapsto\sum_{j=1}^n A_jXA_j$ with positive
coefficients on $\bh$ is necessarily contained in $\br^+$ if $\h$ is infinite dimensional. 
We will show that, contrary to what one might expect, the answer to this question
is negative.  This will be a consequence of the fact that the operator $T=-1$ can be expressed as
\begin{equation}\label{02}T=\sum_{j=1}^n A_jB_j\ \ \mbox{with positive}\ A_j, B_j\in\bh.\end{equation}  
At first the author did not
know how do prove
that every operator $T\in\bh$ is of the form (\ref{02}), but then professor
Heydar Radjavi told him that by \cite{S} and \cite{PT} $T$ is a 
sum of finitely many idempotents  and, since every idempotent is similar to a projection,
$T$ is a sum of products of 
positive operators. To see this, note that an operator $Q$ which is similar to a positive 
operator, say $Q=SPS^{-1}$, is a product of two positive operators: 
$Q=(SS^*)((S^{-1})^*PS^{-1})$.
By Pearcy and Topping \cite{PT}) five idempotents 
are sufficient to express any $T$ in this way and according to 
\cite[Proposition 5.9]{W} this is the minimal number since scalars are in general not 
sums of less than five idempotents. However, since 
idempotents are very special elements, we can not expect that $5$ is the minimal $n$
in (\ref{02}). 

One of the goals of this paper is to find the minimal $n$ above. The result will
imply that even the spectrum of a L\"uders operator of 
small length is not necessarily contained in $\br^+$. 
More precisely, in the next section we will show that every $T\in\bh$ is a sum of three operators $T_j$ 
each of which is similar to a positive operator. Moreover, if $T$ is not a compact perturbation
of a scalar, two operators $T_j$ are sufficient.  This result is optimal
since compact perturbations of nonzero scalars can not be expressed in the form (\ref{02}) with
$n\leq2$. We will also show that the trace class operators with trace not in $\br^+$ can not be 
expressed
as $T_1+T_2$ with both $T_1$ and $T_2$ similar to positive operators in $\bh$. 
As a preliminary step in the proof of the main result we will first show that $T$ is
a sum of four operators $T_j$ similar to positive ones, with some additional properties needed.

In the last section we will first apply this result to answer the above mentioned question
from \cite{N}. Then we will prove that
the spectra of operators of the form $X\mapsto \sum_{j=1}^2A_jXB_j$ with positive $A_j$ and
$B_j$ are contained in 
$\br^+$ if $A_1A_2=A_2A_1$ (or if $B_1B_2=B_2B_1$). 

Throughout the paper $\h$ denotes an infinite dimensional separable
Hilbert space and $\bh$ the algebra of all bounded linear operators on $\h$. (The results  hold
also for non separable $\h$, but in their formulations
the ideal of compact operators must be replaced by the unique proper maximal ideal of $\bh$.)
An operator $T\in\bh$ is called positive if $\inner{T\xi}{\xi}\geq0$ for all
$\xi\in\h$ (thus $T$ is not necessarily definite) and the set of all positive operators
is denoted by $\bh^+$.

\section{Sums of operators similar to positive operators}

We begin with a simple and well-known observation. Let $S\in{\rm B}
(\k\oplus\k)$ be a $2\times 2$ operator matrix
\begin{equation}\label{1}S=\left[\begin{array}{ll}
u&x\\
y&z\end{array}\right],\end{equation}
where $u$ is invertible. Then $S$ is invertible if and only if $z-yu^{-1}x$ is invertible
and in this case
\begin{equation}\label{2}S^{-1}=\left[\begin{array}{cc}
u^{-1}(1+xdyu^{-1})&-u^{-1}xd\\
-dyu^{-1}&d\end{array}\right],\ \ \ \mbox{where}\ d=(z-yu^{-1}x)^{-1}.\end{equation}
To prove this, multiply $S$ from the left by the invertible matrix
$$\left[\begin{array}{cc}
u^{-1}&0\\
-yu^{-1}&1\end{array}\right]$$
to obtain an upper-triangular matrix with $1$ and $z-yu^{-1}x$ along the diagonal.

The main assertion of the following lemma can be deduced from the proof of Theorem 1 in \cite{PT},
but later we will need some additional information  from its proof in
the form presented below.

\begin{lemma}\label{le1}Every operator $T\in\bh$ is a sum of the form $$T=\sum_{j=1}^4
S_jT_jS_j^{-1},$$ where $S_j\in\bh$ and the operators $T_j\in\bh$ are positive with
disjoint spectra $\sigma(T_j)$, each $\sigma(T_j)$ consists of at most two points,
$\sigma(T_1)\subset[0,1]$ and $\sigma(T_j)\subset(1,\infty)$ for $j\ne1$. Moreover, the
range of $T_1$ is closed and has infinite dimension and codimension.

In particular, $T$ can be written as $T=\sum_{j=1}^4A_jB_j$, where $A_j,B_j\in\bh^+$.
\end{lemma}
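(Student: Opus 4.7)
The plan is to follow the strategy of Pearcy--Topping \cite{PT}, exploiting the extra freedom provided by allowing two-point spectra rather than projections. A positive operator $T_j$ with $\sigma(T_j)=\{\alpha_j,\beta_j\}$ has the form $T_j=\beta_jI+(\alpha_j-\beta_j)P_j$ for an orthogonal projection $P_j$, so $S_jT_jS_j^{-1}=\beta_jI+(\alpha_j-\beta_j)E_j$ with $E_j=S_jP_jS_j^{-1}$ an idempotent. Summing over $j$, the sought decomposition $T=\sum_{j=1}^4 S_jT_jS_j^{-1}$ amounts to finding scalars $\alpha_j,\beta_j$ in the prescribed intervals (with disjoint pairs) and idempotents $E_j$ such that
\begin{equation*}
T-cI \;=\; \sum_{j=1}^4(\alpha_j-\beta_j)E_j,\qquad c=\sum_{j=1}^4\beta_j.
\end{equation*}
Thus the essential task is to realize a suitable scalar shift of $T$ as a linear combination of four idempotents with coefficient vector $(\alpha_j-\beta_j)$ consistent with the positivity and interval constraints.

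Next I would realize $\h=\k\oplus\k$ with $\k$ infinite-dimensional and write $T$ as a $2\times 2$ block matrix. The key building block is the observation that an upper-triangular matrix $\bigl(\begin{smallmatrix}\alpha I&X\\0&\beta I\end{smallmatrix}\bigr)$ with $\alpha\ne\beta$ positive scalars is similar, via a unipotent triangular matrix $\bigl(\begin{smallmatrix}I&Y\\0&I\end{smallmatrix}\bigr)$ with $Y=X/(\beta-\alpha)$, to its diagonal $\mathrm{diag}(\alpha I,\beta I)$---a positive operator with two-point spectrum $\{\alpha,\beta\}$. This is precisely where the invertibility formula (\ref{2}) enters, guaranteeing that the conjugating matrices are bounded with bounded inverse. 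The analogous statement holds for lower-triangular matrices, so triangular block matrices with distinct positive scalar diagonals provide a ready supply of candidate summands.

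The heart of the argument is to decompose $T$ as a sum of four such triangular-block summands. Because the diagonal blocks of $T$ are operators and not scalars, the scalar diagonals of the four summands alone cannot recover the diagonal of $T$, so the operator parts of the diagonal blocks must be absorbed into off-diagonal entries of the summands. One adapts the Pearcy--Topping block construction to achieve this, possibly after refining the splitting $\h=\k\oplus\k$; the freedom in choosing the eight scalars $\{\alpha_j,\beta_j\}$---subject only to disjointness, $\{\alpha_1,\beta_1\}\subset[0,1]$, and the three remaining pairs in $(1,\infty)$---provides enough degrees of freedom. For the range condition on $T_1$, I would take $\alpha_1=0$ so that $T_1=\beta_1(I-P_1)$ with $P_1$ an orthogonal projection; choosing $P_1$ of infinite rank and infinite corank (compatible with the ambient splitting) makes the range of $T_1$ a closed subspace of infinite dimension and infinite codimension.

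The main obstacle is carrying out this block decomposition explicitly while simultaneously ensuring that (i) all four conjugating operators $S_j$ produced by (\ref{2}) are bounded with bounded inverse, (ii) the disjointness and interval constraints on the eight scalars are satisfied, and (iii) the first summand acquires the required range structure. The final \emph{in particular} statement is then immediate from the factorization $S_jT_jS_j^{-1}=(S_jS_j^*)((S_j^{-1})^*T_jS_j^{-1})$ recorded in the introduction, which expresses each summand as a product of two positive operators.
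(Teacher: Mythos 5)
Your setup is sound and runs parallel to the paper's: decompose $\h=\k\oplus\k$, take each $T_j$ to be (a direct sum of) positive operators with two-point spectrum, conjugate by triangular-type matrices, and recover the product decomposition at the end via $S_jT_jS_j^{-1}=(S_jS_j^*)((S_j^{-1})^*T_jS_j^{-1})$. But the proposal stops exactly where the proof begins. The paragraph you label ``the heart of the argument'' is a description of what must be achieved, not an argument, and as stated it cannot be carried out: if all four summands are genuinely triangular block matrices with \emph{scalar} diagonal blocks, then their sum has scalar diagonal blocks, so it can never equal $T$ unless $A$ and $D$ are scalars. No choice of the eight scalars $\alpha_j,\beta_j$ fixes this; the operator-valued parts of $A$ and $D$ have to be produced by the \emph{interaction} of the off-diagonal entries, i.e.\ by summands whose conjugating matrices have both corners nonzero. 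Concretely, in the paper the $(1,1)$ and $(2,2)$ entries of $S_jT_jS_j^{-1}$ contain terms $s_jy_j$ and $-y_js_j$ with $s_j=a_jx_j-x_jb_j$, and after summing, matching the diagonal of $T$ reduces to solving
$$x_4y_4-y_4x_4=A+D-a_1-2\beta-3=:T_0$$
for $x_4,y_4$. The essential input making this solvable is the Brown--Pearcy commutator theorem: $T_0$ is a commutator provided it is not a compact perturbation of a scalar, and one must \emph{choose} $a_1$ to be a non-scalar positive operator of the form $\lambda+\mu p$ ($p$ a projection of infinite rank and nullity) precisely to guarantee this non-degeneracy (this choice also delivers the range condition on $T_1$, rather than your $\alpha_1=0$ device). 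Your proposal never invokes the commutator theorem, and without it there is no mechanism for ``absorbing the operator parts of the diagonal blocks into off-diagonal entries.''

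A secondary, smaller issue: the solvability of the remaining off-diagonal equations (recovering $B$ and $C$) forces $a_1$ to be \emph{invertible} (one solves $-a_1x_1-x_3-x_4=B$ for $x_1$), which is incompatible with your choice $\alpha_1=0$, $T_1=\beta_1(I-P_1)$; and the disjointness of the spectra $\sigma(T_j)$ is obtained in the paper by taking $a_j,b_j$ to be scalars for $j\geq2$ with $a_j-b_j=1$ and adjusting them at the end, not by a general degrees-of-freedom count. In short: correct frame, but the one genuinely nontrivial step --- reducing the diagonal-matching problem to the Brown--Pearcy commutator theorem and arranging its hypothesis --- is missing.
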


\begin{proof}Decompose $\h$ into an orthogonal sum of two isomorphic closed
subspaces, $\h=\k\oplus\k$; then $T$ is represented by an operator matrix of the form
\begin{equation}\label{3}T=\left[\begin{array}{cc}
A&B\\
C&D
\end{array}\right].
\end{equation}
It suffices to find diagonal positive operators $T_j=a_j\oplus b_j$ ($a_j,b_j\in\bk$)
and invertible operators $S_j$ ($j=1,2,3,4$) of the form (\ref{1}) such that 
$T=\sum_{j=1}^4S_jT_jS_j^{-1}.$ It turns out that we can even take $S_j$ of the form
$$S_j=\left[\begin{array}{cc}
1&x_j\\
y_j&1+y_jx_j
\end{array}\right].$$
Then 
$$S_jT_jS_j^{-1}=\left[\begin{array}{cc}
a_j+s_jy_j&-s_j\\
y_ja_j-b_jy_j+y_js_jy_j&b_j-y_js_j\end{array}\right],\ \ \mbox{where}\ s_j:=a_jx_j-x_jb_j.$$
There are many appropriate choices for $x_j,y_j,z_j,a_j,b_j$ in order to make
the sum $\sum_{j=1}^4S_jT_jS_j^{-1}$ equal to $T$. For example, if we let
$y_1=0=x_2$, $y_3=1$, $b_1=0$ and for $j\geq2$ choose all $a_j$ and $b_j$ to be positive scalars
with $a_j-b_j=1$, and denote $\beta=\sum_{j=2}^4b_j$ (so that $\sum_{j=2}^4a_j=\beta+3$),
then
\begin{equation}\label{4}\sum_{j=1}^4S_jT_jS_j^{-1}=\left[\begin{array}{cc}
a_1+\beta+3+x_3+x_4y_4&-a_1x_1-x_3-x_4\\
y_2+x_3+1+y_4x_4y_4&\beta-x_3-y_4x_4\end{array}\right].
\end{equation}
To achieve that the matrix in (\ref{4}) will be equal to $T$, we only need to choose
$x_3, x_4, y_4$  in $\bk$ and  invertible $a_1\in\bk^+$ so that 
\begin{equation}\label{5}a_1+\beta+3+x_3+x_4y_4=A\ \ \mbox{and}\ \ \beta-x_3-y_4x_4=D,
\end{equation}
for then the off-diagonal terms of the matrix (\ref{4}) can be made equal to $B$ and $C$
by a suitable choice of $y_2$ and $x_1$. Adding the two equations (\ref{5}) we see,
that we only need to choose $x_4,y_4$ and $a_1$ so that 
\begin{equation}\label{6}x_4y_4-y_4x_4=A+D-a_1-2\beta-3=:T_0,
\end{equation} for then $x_3$ can be computed from either of the equations (\ref{5}).
So (for a fixed $\beta$), we first choose an invertible positive $a_1\in\bk$ of the form
$\lambda+\mu p$, where $\lambda,\mu\in\br^+$ and $p$ is a projection of infinite rank
and nullity, such that $\sigma(a_1)\subset(0,1]$ and $T_0$ is not a compact
perturbation of a scalar. Then $T_0$ is a commutator by \cite{BP} (a simplified proof 
is in \cite{AS}), which means that there exist $x_4$ and $y_4$ satisfying (\ref{6}).
By suitably choosing scalars $a_j$ and $b_j$ ($j\geq2$) we can make the spectra of $T_j$ 
disjoint for all $j$.  
\end{proof}

\begin{re}\label{re1}For a later use observe that in the above proof the spectra of $a_j$ and $b_j$ 
are disjoint for all $j$; in fact all $a_j$ and $b_j$
chosen above are scalars, except possibly $a_1$. Also note that the operator
$S_1T_1S_1^{-1}$ has the form
$$\left[\begin{array}{cc}
a_1&*\\
0&0\end{array}\right],$$
where $a_1\in\bk^{+}$. 
\end{re}

\begin{theorem}\label{th2}Every $T\in\bh$ is of the form $T=\sum_{j=1}^3S_jT_jS_j^{-1},$
where $S_j\in\bh$ and the operators $T_j\in\bh$ are positive (and invertible for $j\leq2$) 
with finite spectra $\sigma(T_j)$, each $\sigma(T_j)$ consists
of at most four points. Moreover, $0$ is an isolated point of $\sigma(T_3)$, the range of $T_3$ 
is closed and has infinite dimension and codimension.
\end{theorem}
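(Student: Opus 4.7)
The plan is to begin with the four-term decomposition of Lemma~\ref{le1} and merge two of its conjugates of positive operators into a single term of the same type; the four-point spectral bound in the theorem is exactly what is needed to accommodate the union of two two-point spectra.

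First, I would apply Lemma~\ref{le1}, choosing $a_1 = \lambda + \mu p \in \bk^+$ with $\lambda, \mu > 0$ and $p$ a projection of infinite rank and nullity. Then $T_1 = a_1 \oplus 0$ is positive with finite spectrum $\{0, \lambda, \lambda+\mu\}$, $0$ is isolated in $\sigma(T_1)$, and $T_1$ has closed range of infinite dimension and codimension. Thus the conjugate $S_1 T_1 S_1^{-1}$ already has every property demanded of the third summand $S_3 T_3 S_3^{-1}$ of the theorem, and it remains only to rewrite $S_2 T_2 S_2^{-1} + S_3 T_3 S_3^{-1} + S_4 T_4 S_4^{-1}$ as a sum of just two conjugates of positive invertible operators.

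The central step is to merge two of these three summands. The idea is to use the remaining freedom in the parameters $x_j, y_j$ of Lemma~\ref{le1}'s proof, together with the freedom to replace the scalars $a_j, b_j$ ($j \ge 2$) by two-eigenvalue positive operators of the form $\alpha + \gamma q$, to arrange that two of the three terms become simultaneously block-upper (or block-lower) triangular on $\h = \k \oplus \k$, with positive invertible diagonal blocks. The sum of two such triangular operators is again triangular with diagonal blocks equal to the sums of the originals, and by a triangular similarity of the kind computed at the start of Section~2 it is similar to that diagonal positive operator, whose spectrum has at most four points. Together with the surviving third term from Lemma~\ref{le1} (similar to a positive invertible operator with two-point spectrum) and with $S_1T_1S_1^{-1}$, this yields the required three-term decomposition.

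The main obstacle is the algebraic juggling needed to solve the analogue of the system (\ref{5})--(\ref{6}) under the extra triangularity constraint. In Lemma~\ref{le1} this system collapsed to the single commutator equation (\ref{6}); imposing triangularity forces a further equation which, because of the spectral identity $\sigma(uv) \setminus \{0\} = \sigma(vu) \setminus \{0\}$, cannot be met with scalar $a_j, b_j$. Allowing $a_j, b_j$ to have two eigenvalues provides precisely the flexibility needed to solve it, which is why the theorem permits up to four spectral points per $T_j$. A final perturbation of the parameters, as at the end of the proof of Lemma~\ref{le1}, renders the three spectra $\sigma(T_j)$ disjoint and finishes the argument.
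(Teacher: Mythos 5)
Your proposal diverges from the paper's argument at the crucial point, and the divergence hides a genuine gap. The paper does not obtain the three-term decomposition by merging two of the four summands of Lemma~\ref{le1} applied to $T$. Instead it makes a fresh three-term Ansatz $\sum_{j=1}^3 S_j(a_j\oplus b_j)S_j^{-1}$, normalizes $T$ (via Brown--Pearcy) so that $D=0$ and $B$ is an isometry, reduces everything to the system (\ref{16})--(\ref{17}), and then applies Lemma~\ref{le1} \emph{to the corner $A$ alone}: three of the four summands of $A$ become the upper diagonal blocks $c_1,c_2,c_3$, while the fourth summand $A_4$ (which by Remark~\ref{re1} satisfies $P^{\perp}A_4=0$) is transported through the isometry $B$ to become the positive operator $B^*(A-c)B=b_1+b_2+b_3$ supplying the lower diagonal blocks. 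That recycling of the fourth term is the whole content of the reduction from four summands to three, and your scheme has no counterpart to it.

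Concretely, the merging step you propose does not go through. In the proof of Lemma~\ref{le1} the four summands are parametrically interlocked: $x_4,y_4$ solve the commutator equation (\ref{6}), $x_3$ is the free parameter that splits the summed diagonal condition into the two separate equations (\ref{5}), and $x_1$, $y_2$ are the free parameters that match the off-diagonal corners $B$ and $C$ of $T$. Forcing two of the summands $j\in\{2,3,4\}$ to be simultaneously block-triangular kills at least one of these degrees of freedom. For instance, if the two merged terms are upper triangular, the $(2,1)$ corner $C$ must be produced entirely by the one remaining general term, whose parameters are already pinned down by the commutator equation; if they are lower triangular, one loses $x_3$ and is forced to prescribe $s_4y_4$ and $-y_4s_4$ separately as $A-(\mbox{positive, finite spectrum})$ and $D-(\mbox{positive, finite spectrum})$. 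Since $\sigma(s_4y_4)\setminus\{0\}=\sigma(y_4s_4)\setminus\{0\}$ always, this is impossible whenever, say, $D=0$ and $A$ has large spectrum. You correctly identify this spectral identity as the obstacle, but the asserted remedy --- letting $a_j,b_j$ have two eigenvalues instead of one --- only shifts the two targets by positive operators with two-point spectra and cannot reconcile their nonzero spectra; no justification is offered and I do not see one. As written, the proof defers the entire difficulty of the theorem to an unsupported claim, so the argument is not complete.
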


\begin{proof}As in the proof of Lemma \ref{le1} we represent $T$ by the operator matrix
(\ref{3}). Now we try to find positive block-diagonal operators $T_j=a_j\oplus b_j$ and
invertible operators $S_j\in\bh$ of the form (\ref{1}) (with $z-yu^{-1}x=1$) such that $\sum_{j=1}^3S_jT_jS_j^{-1}=T$.
Denoting 
$$S_j=\left[\begin{array}{cc}
u_j&x_j\\
y_j&z_j\end{array}\right],\ \mbox{where}\ u_j\ \mbox{is invertible}\ \mbox{and}\ 
z_j-y_ju_j^{-1}x_j=1,$$
we compute (using (\ref{2})) that
$$S_jT_jS_j^{-1}=\left[\begin{array}{cc}
c_j+s_jv_j&-s_j\\
v_jc_j-b_jv_j+v_js_jv_j&b_j-v_js_j\end{array}\right],$$
where
\begin{equation}\label{7}c_j:=u_ja_ju_j^{-1},\ \ v_j:=y_ju_j^{-1},\ 
\ \mbox{and}\ \ s_j:=c_jx_j-x_jb_j.
\end{equation}
Note that if the spectra of $b_j$ and $c_j$ are disjoint, then from 
(\ref{7}) $a_j$, $y_j$, $b_j$ and $x_j$ can all be computed from 
$c_j$, $u_j$, $v_j$, $b_j$, and $s_j$. (That the equation $c_jx_j-x_jb_j=s_j$
can be solved for $x_j$ is Rosenblum's theorem \cite[p. 8]{RR}.) 
Further, we assume that the matrix $S_3$ is diagonal (that is, $x_3=0=y_3$, so we will
only need that the spectra of $c_j$ and $b_j$ are disjoint for $j=1,2$). Then
the condition $\sum S_jT_jS_j^{-1}=T$ is equivalent to the following four equations:
\begin{equation}\label{8}s_1v_1+s_2v_2=A-c_1-c_2-c_3,\ \ \ s_1+s_2=-B,\end{equation}
\begin{equation}\label{9}v_1c_1-b_1v_1+v_2c_2-b_2v_2+v_1s_1v_1+v_2s_2v_2=C,\ \
v_1s_1+v_2s_2=-D+b_1+b_2+b_3.
\end{equation}
Set $c:=c_1+c_2+c_3$, $b:=b_1+b_2+b_3$ and
$$s:=s_1,\ \ \ v:=v_2,\ \ \ w:=v_2-v_1.$$
Then from the second equation in (\ref{8}) we get $s_2=-(B+s)$; using this, the other
three equations (\ref{8}), (\ref{9}) can be rewritten as
\begin{equation}\label{10}Bv+sw=c-A,\ \ \ \ \ vB+ws=D-b,\end{equation}
\begin{equation}\label{11}v(c_1+c_2-sw)-(b_1+b_2+ws)v-wc_1+b_1w+wsw-vBv=C.\end{equation}
From (\ref{10}) we have that $c_1+c_2-sw=A-c_3+Bv$ and $b_1+b_2+ws=D-b_3-vB$, hence (\ref{11})
can be rewritten as
\begin{equation}\label{12}wsw-wc_1+b_1w=C-v(A-c_3)+(D-b_3)v-vBv.\end{equation}
We are going to show that the system of equations (\ref{10}), (\ref{12}) has a solution.

First suppose that $T$ is not a compact perturbation of a scalar. Then 
we may assume that in the matrix representation of $T$ we have that $D=0$ and that $B$
is an isometry with the range of $B$ isomorphic to its orthogonal complement in $\k$ since
by \cite[Corollary 3.4]{BP} $T$ is similar to such an operator.
In this case we shall see that we can even afford to choose $s=0$, so that the above system 
of equations simplifies to
\begin{equation}\label{13}Bv=c-A,\end{equation}
\begin{equation}\label{14}vB=-b,\end{equation}
\begin{equation}\label{15}b_1w-wc_1=C-v(A-c_3)+(-b_3)v-vBv.\end{equation}
Since $B^*B=1$, the equation (\ref{13}) is equivalent to the following two:
\begin{equation}\label{16}v=B^*(c-A)\ \ \mbox{and}\ \ P^{\perp}(c-A)=0,\ \mbox{where}\ 
P:=BB^*\ \mbox{and}\ P^{\perp}:=1-P.\end{equation}
Using this expression for $v$, (\ref{14}) can be rewritten as
\begin{equation}\label{17}b_1+b_2+b_3=b=B^*(A-c)B.\end{equation}
If there exist $v$, $c_j$ and $b_j$ ($j=1,2,3$) such that the equations (\ref{16}) and
(\ref{17}) are satisfied and the spectra of $c_1$ and $b_1$ are disjoint, then the
equation (\ref{15}) can be solved for $w$ by Rosenblum's theorem.

To show that the system (\ref{16}), (\ref{17}) has a solution, represent $A$ by a
$2\times 2$ operator matrix with respect to the decomposition $\k=P\k\oplus P^{\perp}\k$.
By Lemma \ref{le1} $A=\sum_{j=1}^4A_j$ where each $A_j$ is similar to a positive operator; 
moreover, by Remark \ref{re1} we may assume that  (with respect
to the decomposition $\k=P\k\oplus P^{\perp}\k$) $A_4$ is of the form
\begin{equation}\label{100}A_4=\left[\begin{array}{cc}
a&r\\
0&0\end{array}\right],\ \mbox{where}\ \ a\geq0,\end{equation}
which means that $P^{\perp}A_4=0$.
Thus, if we put $c_j=A_j$ for $j=1,2,3$ (and $c=c_1+c_2+c_3$), then we have $P^{\perp}(A-c)
=P^{\perp}A_4=0$, which is just the condition in (\ref{16}). Further
\begin{equation}\label{int}B^*(A-c)B=B^*A_4B=B^*A_4PB=B^*GB,\end{equation}
where 
$$G:=A_4P=a\oplus0.$$
Thus the operator $B^*(A-c)B$ is positive and hence it can be written (in many ways) as
a sum of three positive operators $b_j$, which is just what the condition (\ref{17}) requires. We may
choose $b_3=0$.
To see that it is possible to choose $b_j$ and $c_j$ ($j=1,2$) so that their spectra
are disjoint, note
that $PB$ is a unitary operator from $\k$ onto $P\k$ which intertwines $a$ and
$b=A-c$ by (\ref{int}), hence $b$ and $a$ have the same spectrum. By 
Lemma \ref{le1} we may choose $a$ and $c_j=A_j$ so that each of their spectra consists of
at most two points, $\sigma(a)\subseteq (0,1]$ and 
$\sigma(A_j)\subset(1,\infty)$ ($j=1,2,3$). Since $b_j\geq 0$ 
and $b_1+b_2=b$, the spectra of
$b_j$ are contained in $[0,1]$, hence $\sigma(b_j)\cap\sigma(c_j)=\emptyset$. Since $\sigma(b)$
consists of at most two points in $(0,1]$, we may choose $b_1, b_2$ to have the same
property. (We may choose for $b_1$ a sufficiently small positive scalar, for example.)

Since $T_j$ is similar to $a_j\oplus b_j$ and $a_j$ is similar to $c_j=A_j$ ($j=1,2,3$),
$\sigma(T_j)=\sigma(A_j)\cup\sigma(b_j)$ consists of at most four points. Other properties of
operators $T_j$ stated in the theorem also follows easily from that of $c_j$ and $a_j$ chosen
above.

Now we consider the case when $T$ is a compact perturbation of a scalar. In this case
let $E=1\oplus0$, 
the projection onto the first summand in the decomposition $\h=\k\oplus\k$.
Then $\tilde{T}:=T-E$ is not a compact perturbation of a scalar, so by the already proved
case $\tilde{T}$ can be expressed as
$\tilde{T}=\sum_{j=1}^3S_j(a_j\oplus b_j)S_j^{-1},$ where $a_j\geq0$ and $b_j\geq0$
and $S_3$ is block-diagonal. Since $S_3$ commutes with $E$, we have
$$T=\tilde{T}+E=\sum_{j=1}^2S_j(a_j\oplus b_j)S_j^{-1}+S_3((a_3\oplus b_3)+E)S_3^{-1},$$
which is a sum of three operators similar to positive ones with (at most) four-point spectra.
\end{proof}

\begin{re}\label{re20}Observe that in the proof of Theorem \ref{th2} the operator $T_3$
is of the form $e\oplus0$, where $e$ is similar to a positive invertible operator with at most two-point
spectrum.
\end{re}

\begin{co}\label{co4}Each $T\in\bh$ can be expressed as $T=\sum_{j=1}^3A_jB_j$,
where $A_j,B_j\in\bh^+$.
\end{co}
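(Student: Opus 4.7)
The plan is to combine Theorem \ref{th2} with the elementary observation, already recorded in the introduction, that any operator similar to a positive operator is itself a product of two positive operators. This reduces the corollary to a one-line deduction.

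More concretely, I would first invoke Theorem \ref{th2} to write $T=\sum_{j=1}^{3}S_jT_jS_j^{-1}$ with each $T_j\in\bh^+$ and each $S_j\in\bh$ invertible. I would then fix one index $j$ and show that
\begin{equation*}
S_jT_jS_j^{-1}=(S_jS_j^*)\bigl((S_j^{-1})^*T_jS_j^{-1}\bigr).
\end{equation*}
Set $A_j:=S_jS_j^*$ and $B_j:=(S_j^{-1})^*T_jS_j^{-1}$. Clearly $A_j\geq 0$, and $B_j\geq 0$ because for every $\xi\in\h$ one has $\inner{B_j\xi}{\xi}=\inner{T_j(S_j^{-1}\xi)}{S_j^{-1}\xi}\geq 0$. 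The product identity is checked by noting that $S_j^*(S_j^{-1})^*=(S_j^{-1}S_j)^*=I$, so $A_jB_j=S_j(S_j^*(S_j^{-1})^*)T_jS_j^{-1}=S_jT_jS_j^{-1}$. Summing over $j=1,2,3$ gives the desired representation $T=\sum_{j=1}^{3}A_jB_j$.

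There is essentially no obstacle here; the only point worth flagging is that Theorem \ref{th2} genuinely requires each $T_j$ to be positive (not merely similar to positive), which is what makes the above factorization into two positive factors work. Invertibility of $S_j$ is also used, but this is supplied by Theorem \ref{th2}. The corollary is therefore immediate from the theorem and the factorization trick attributed to Radjavi in the introduction.
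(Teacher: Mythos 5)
Your proof is correct and is exactly the argument the paper intends: the corollary is stated without proof because it follows immediately from Theorem \ref{th2} together with the factorization $SPS^{-1}=(SS^*)((S^{-1})^*PS^{-1})$ already recorded in the introduction. Nothing is missing.
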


\begin{theorem}\label{th3}If $T\in\bh$ is not a compact
perturbation of a scalar, then $T$ is a sum of two operators similar to positive operators.
\end{theorem}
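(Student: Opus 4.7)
The plan is to refine the three-summand construction in Theorem \ref{th2} into a two-summand one, using the additional freedom afforded by the hypothesis that $T$ is not a compact perturbation of a scalar.

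I would invoke \cite[Corollary 3.4]{BP} to replace $T$ by a similar operator of the form $T=\begin{pmatrix}A&B\\C&0\end{pmatrix}$ on $\k\oplus\k$, with $B$ an isometry and $P:=BB^*$ of infinite corank. Seeking $T=S_1T_1S_1^{-1}+S_2T_2S_2^{-1}$ with $T_j=a_j\oplus b_j\geq 0$ and $S_j$ of the form used in Theorem \ref{th2}, and introducing $c_j, v_j, s_j$ as in (\ref{7}) together with $t:=s_1$ (so $s_2=-B-t$), $w:=v_1-v_2$, $v:=v_2$, the same elimination that produced (\ref{12}) yields the reduced system
\begin{align*}
tw-Bv&=A-c_1-c_2,\\
wt-vB&=b_1+b_2,\\
wc_1-b_1 w&=C-vA-vBv-wtw.
\end{align*}
In Theorem \ref{th2} the choice $t=0$ sufficed because the third pair $(c_3,b_3)$ absorbed both the $P^\perp$-content of $A-c_1-c_2$ and the slack in the positivity of $b_1+b_2$; here neither luxury is available.

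My proposal is to take $t=B\tau$ for a suitable $\tau\in\bk$. The first equation then becomes $B(\tau w-v)=A-c_1-c_2$, forcing $P^\perp(A-c_1-c_2)=0$ and giving $v=\tau w-B^*(A-c_1-c_2)$. Substituting into the second equation yields
\[b_1+b_2=[wB,\tau]+B^*(A-c_1-c_2)B.\]
One then wants $c_1,c_2\geq 0$ with $A-c_1-c_2=A_4'$ in the form (\ref{100}) (so that the second term is positive) and $\tau,w$ for which the commutator $[wB,\tau]$ combines with this positive term to give a positive operator that splits as $b_1+b_2$ with $\sigma(b_1)\cap\sigma(c_1)=\emptyset$. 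The third equation then determines $w$ by Rosenblum's theorem \cite[p.~8]{RR}.

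The main obstacle is this decomposition/positivity/spectral-separation step. Lemma \ref{le1} produces four similar-to-positive summands of $A$, so merging them down to only two $c_j$'s (plus $A_4'$ in the special form) requires exploiting the extra similarity freedom available within the Brown--Pearcy normal form, in particular conjugations by $\begin{pmatrix}1&0\\Y&1\end{pmatrix}$ and $\begin{pmatrix}1&X\\0&1\end{pmatrix}$, which shift $A$ by $BY$ and by $-XC$ respectively. Engineering $\tau,w$ so that the commutator term delivers the required positivity is the analogue of the commutator construction (\ref{6}) in Lemma \ref{le1}, and again draws on the commutator theorem of \cite{BP} together with the infinite corank of $P\k$.
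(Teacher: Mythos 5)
Your setup is the right one and your reduced system is essentially the paper's (\ref{10})--(\ref{12}) specialized to two summands, but there is a genuine gap exactly where you flag ``the main obstacle'', and the ansatz $t=B\tau$ makes matters worse rather than better. With $t\neq 0$ the unknown $w$ enters all three equations at once: it appears in the commutator $[wB,\tau]$ that is supposed to make $b_1+b_2$ positive, and it also appears quadratically (through $wtw$ and through $v=\tau w-B^*(A-c_1-c_2)$) on the right-hand side of your third equation, which is therefore no longer a Sylvester equation and cannot simply be ``determined by Rosenblum's theorem''. You cannot first engineer $w$ (together with $\tau$) to produce the required positive commutator and then also solve for $w$ from the third equation; the system is entangled and you give no argument that it is consistent. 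The positivity of $[wB,\tau]+B^*(A-c_1-c_2)B$ and the spectral separation of $b_1,b_2$ from $c_1,c_2$ are likewise only stated as goals, not achieved.

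The paper resolves the difficulty by a different and much simpler device: it keeps $s_1=0$ (so the second equation stays $b_1+b_2=B^*(A-c)B$, free of $w$, and the third remains a genuine Sylvester equation solvable by Rosenblum) and instead improves the decomposition of the corner entry $A$. One first re-splits $\h=\k_1^{\perp}\oplus\k_1$ so that in the Brown--Pearcy normal form the $(1,1)$ entry $A$ is itself not a compact perturbation of a scalar; then the already proved Theorem \ref{th2} (rather than Lemma \ref{le1}) applies to $A$ and, via Remark \ref{re20}, gives $A=\tilde A_1+\tilde A_2+\tilde A_3$ with only two ``generic'' summands and with $\tilde A_3=e\oplus0$ relative to $P\k\oplus P^{\perp}\k$, so $P^{\perp}(A-c)=0$ automatically. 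Taking $c_j=\tilde A_j$ for $j=1,2$ and $c_3=b_3=0$, the operator $B^*(A-c)B$ is unitarily equivalent to $e$, has a two-point positive spectrum, and so splits as $b_1+b_2$ with $\sigma(b_j)\cap\sigma(c_j)=\emptyset$. This bootstrap of Theorem \ref{th2} into its own refinement is the idea your proposal is missing; without it (or a completed version of your commutator construction) the argument does not close.
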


\begin{proof}We have to show that in the  proof of Theorem \ref{th2} $a_3$ and $b_3$ can
be taken to be $0$. That $b_3$ can be taken to be $0$ has been already observed in that proof. 
Now note that in the matrix representation (\ref{3}) of $T$ we may
assume, in addition to $D=0$ and $B$ is an isometry, that $A$ is not a compact perturbation 
of a scalar. For this, we simply decompose the second copy of $\k$ into two orthogonal isomorphic
closed subspaces, $\k=\k_0\oplus\k_1$, and decompose $\h$ as $\h=\k_1^{\perp}\oplus\k_1$. Since
$B$ maps $\k_1$ isometrically into $\k_1^{\perp}$ the matrix of $T$ has $0$ on the $(2,2)$
position and an isometry with infinitely codimensional range on the $(1,2)$ position. 
The new element on the position $(1,1)$ is than
not a compact perturbation of a scalar. So we will assume that already in the initial
matrix representation of $T$ the element $A$ is not a compact perturbation of a scalar.
Consider now the matrix of $A$ relative to the
decomposition of the Hilbert space of $A$ into the range of  $B$ and its orthogonal
complement. Since $A$ is not a compact perturbation of a scalar, by Theorem \ref{th2} and 
Remark \ref{re20} $A$ is of the form $A=\sum_{j=1}^3\tilde{A}_j$, where $\tilde{A}_1$ and
$\tilde{A}_2$ are similar to positive invertible operators each with at most four points in its
spectrum and $\tilde{A}_3$ is of the form $e\oplus0$ with $e$ similar to a positive invertible
operator with a two-point spectrum. By 
the same reasoning as in the proof of Theorem \ref{th2} (see the paragraph containing  (\ref{100});
the role of
$A_4$ is now played by $\tilde{A_3}$) we see that the system of
equations (\ref{16}), (\ref{17}) has a solution such that $c_j=\tilde{A}_j$ for $j=1,2$ and
$c_3=0=b_3=0$. But we have to show also that we can achieve $\sigma(c_j)\cap\sigma(b_j)=
\emptyset$ ($j=1,2$) in order to assure that (\ref{15}) has a solution for $w$ and that
$x_j$ can be computed from the last equation in (\ref{7}). For this we note now  that the operator
$B^*(A-c)B=B^*\tilde{A}_3B$ is unitarily equivalent to  $e$. Since $\sigma(c_j)$ ($j=1,2$)
is a finite subset of $(0,\infty)$ and $\sigma(B^*(A-c)B)$  consists of just two 
positive points, it follows that $B^*(A-c)B$ can be expressed as a sum
$b_1+b_2$, where $b_j\geq0$ and
$\sigma(b_j)\cap\sigma(c_j)=\emptyset$ for both $j=1,2$. 
\end{proof}

An operator $T\in\bh$ of the form $\lambda+K$, where $\lambda\in\bc\setminus\br^+$ and
$K$ is compact, is not of the form 
\begin{equation}\label{P}PQ+RS\ \ \ \mbox{for any}\ P,Q,R,S\in\bh^+.\end{equation} To see 
this, just note
that the spectrum of  the coset $\dot{R}\dot{S}$ in the Calkin algebra is the same as
the spectrum of $\dot{S}^{1/2}\dot{R}\dot{S}^{1/2}$, hence contained in $\br^+$, while
the spectrum of $\lambda-\dot{P}\dot{Q}$ is contained in the ray $\lambda-\br^+$
which is disjoint with  $\br^+$. 

Each compact operator on a Hilbert space is an additive commutator of two bounded operators
\cite{AS}. By an analogy one might conjecture that each compact operator is a sum of
two operators similar to positive ones, but this is not true.

\begin{pr}If $T\in{\rm C}^1(\h)$ (the trace class) is nonzero and $\tr(T)$ is not positive, then $T$ is 
not a sum of two operators in $\bh$ similar to positive ones.
\end{pr}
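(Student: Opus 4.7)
Assume for contradiction that $T = T_1 + T_2$ where $T_j = S_j P_j S_j^{-1}$ with $S_j \in \bh$ invertible and $P_j \in \bh^+$. The plan is to show that in fact $P_1, P_2 \in C^1(\h)$; once this is established, $T_j \in C^1(\h)$ (since the trace class is a two-sided ideal), and cyclicity of the trace gives $\tr(T_j) = \tr(P_j) \ge 0$, so $\tr(T) = \tr(P_1) + \tr(P_2) \in \br^+$, contradicting the hypothesis.

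The first step is to show that each $P_j$ is compact, by the same Calkin-algebra argument as in the remark preceding the proposition. Since $T \in C^1(\h) \subset \kh$, its image $\dot T$ in $\bh/\kh$ vanishes, hence $\dot T_1 = -\dot T_2$. Each $\dot T_j$ is similar (via $\dot S_j$) to the positive element $\dot P_j$, so $\sigma(\dot T_j) \subseteq \br^+$; together with $\dot T_1 = -\dot T_2$ this forces $\sigma(\dot T_1) \subseteq \br^+ \cap (-\br^+) = \{0\}$, so $\dot P_1$ is positive with spectrum $\{0\}$ and therefore vanishes. Thus $P_1 \in \kh$, and likewise $P_2 \in \kh$.

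The harder step is to upgrade compactness to trace class. For this I would invoke the eigenvalue perturbation theorem of Lidskii--Markus type: for compact $A, B \in \bh$ with $A - B \in C^1(\h)$, the nonzero eigenvalues (with algebraic multiplicities) can be enumerated so that $\sum_n |\lambda_n(A) - \lambda_n(B)| < \infty$. Apply this with $A := T_2$ and $B := -T_1$, so that $A - B = T \in C^1(\h)$. Similarity preserves the nonzero eigenvalues with algebraic multiplicities, so those of $T_j$ coincide with those of $P_j \ge 0$ and are thus nonnegative; hence the nonzero eigenvalues of $A$ lie in $(0, \infty)$ and those of $B$ in $(-\infty, 0)$. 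Under any pairing $\pi$,
\[ |\lambda_n(A) - \lambda_{\pi(n)}(B)| = \lambda_n(P_2) + \lambda_{\pi(n)}(P_1), \]
a sum of two nonnegative reals; so the finiteness of $\sum_n |\lambda_n(A) - \lambda_{\pi(n)}(B)|$ immediately forces both $\sum_n \lambda_n(P_1) < \infty$ and $\sum_n \lambda_n(P_2) < \infty$, i.e., $P_1, P_2 \in C^1(\h)$, which closes the argument.

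The main obstacle is this last step: in the non-self-adjoint setting at $p = 1$ the Lidskii--Markus bound is more delicate than in the self-adjoint or Hilbert--Schmidt cases, and the sharp Schatten-norm inequality may carry a logarithmic correction. What the argument really needs, however, is only the \emph{finiteness} of the perturbed eigenvalue sum, and the sign-separated spectra of $T_2$ and $-T_1$ then split this into the separate summability of $\{\lambda_n(P_j)\}$ needed to reach the contradiction.
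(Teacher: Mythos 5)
Your first step (compactness of $P_1,P_2$ via the essential spectrum) is correct and is exactly how the paper's proof begins. The gap is the second step. The ``Lidskii--Markus type'' theorem you invoke --- that for compact $A,B$ with $A-B\in{\rm C}^1(\h)$ the nonzero eigenvalues admit an enumeration with $\sum_n|\lambda_n(A)-\lambda_n(B)|<\infty$ --- is false for non-normal compact operators. Already in finite dimensions, adding $\varepsilon$ in the corner of an $n\times n$ nilpotent Jordan block (a rank-one perturbation of trace norm $\varepsilon$) moves the eigenvalues from $0$ to the $n$-th roots of $\varepsilon$, so the total eigenvalue displacement is $n\varepsilon^{1/n}\gg\varepsilon$; taking a direct sum of such blocks with suitable scalings produces compact $A,B$ with $A-B\in{\rm C}^1(\h)$ but no summable matching of eigenvalues. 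The genuine $\ell^1$ matching theorems (Lidskii, Kato) require $A$ and $B$ self-adjoint, and even for normal matrices the ${\rm C}^1$ bound degrades logarithmically, as you yourself note. Here $T_2$ and $-T_1$ are merely similar to $\pm$positive operators with no control on the similarities, so the condition numbers entering Hoffman--Wielandt-type generalizations are unbounded. Declaring that ``what the argument really needs is only the finiteness'' does not close the gap: the finiteness is precisely the assertion that fails in general, and you offer no argument exploiting the special structure to recover it.

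It is instructive that the paper never establishes $P_j\in{\rm C}^1(\h)$ at all (it is not clear this is even true). It proves only $P_j\in{\rm C}^2(\h)$, by rewriting $F+A=-SBS^{-1}$ as the Sylvester equation $AS+SB=G$ with $G\in{\rm C}^2(\h)$, reading off the entries $(\alpha_i+\beta_j)\sigma_{i,j}=\psi_{i,j}$ in a basis diagonalizing $A$ and $B$, and using positivity ($\alpha_i+\beta_j\geq\beta_j$) together with the lower bound on the columns of the invertible $S$ to get $\sum_j\beta_j^2<\infty$. It then sidesteps trace-class membership entirely: summing \emph{diagonal entries} over finite blocks, the partial sums for a positive operator are nonnegative and converge in $[0,\infty]$ whether or not the operator is trace class, while those of $F$ converge to $\tr(T)$; the Hilbert--Schmidt bound is needed only to control, via Weyl--von Neumann, the trace-norm error in replacing the positive factor of the polar decomposition of $S$ by a diagonal operator. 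To salvage your outline you would have to prove your perturbation lemma for this restricted class of operators, which looks at least as hard as the paper's route. A minor further point: your final contradiction as written does not cover $\tr(T)=0$, which the hypothesis ``not positive'' allows; there you must add that $\tr(P_1)=\tr(P_2)=0$ forces $P_1=P_2=0$ and hence $T=0$, contradicting $T\neq0$.
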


\begin{proof}Assume the contrary, that $T=S_1AS_1^{-1}+S_2BS_2^{-1}$, where $A,B\in\bh^+$.
Put $F:=-S_1^{-1}TS_1$  and $S=S_1^{-1}S_2$. Then
\begin{equation}\label{50}F+A=-SBS^{-1}.\end{equation}
Considering the essential spectra, it follows from (\ref{50}) and the positivity of $A$
and $B$ that $A$ and $B$ must be compact. We claim, that $A$ and $B$ must be in the
Hilbert-Schmidt class ${\rm C}^2(\h)$. For a proof we may first replace $B$ by a unitarily
equivalent operator (and modify $S$ accordingly) to reduce to the situation when $A$
and $B$ can be diagonalized in the same orthonormal basis $\mathbb{B}$ of $\h$. Let $(\alpha_j)$ and
$(\beta_j)$ be the lists of eigenvalues of $A$ and $B$ in decreasing order (each eigenvalue
repeated according to its multiplicity). From (\ref{50}) we have $AS+SB=G$, where $G:=-FS$. 
Denoting by $\sigma_{i,j}$ and $\psi_{i,j}$ the entries of the matrices of $S$ and $G$ in the basis 
$\mathbb{B}$, this means that
\begin{equation}\label{51}(\alpha_i+\beta_j)\sigma_{i,j}=\psi_{i,j}.\end{equation}
Let $\gamma_j:=(\sum_i|\psi_{i,j}|^2)^{1/2}$ and note that $\sum_j\gamma_j^2<\infty$ since $G\in{\rm C}^2(\h)$.
Since $S$ is invertible (in particular, bounded from below), there exists a scalar $\gamma>0$ such
that $\sum_i|\sigma_{i,j}|^2\geq \gamma$ for all $i$, hence (\ref{51}) implies that
$$\beta_j^{-2}\gamma_j^2=\beta_j^{-2}\sum_i|\psi_{i,j}|^2
=\sum_i\frac{(\alpha_i+\beta_j)^2}{\beta_j^2}|\sigma_{i,j}|^2
\geq\sum_i|\sigma_{i,j}|^2\geq \gamma,$$
whenever $\beta_j\ne0$. Thus $\beta_j^2\leq|\gamma_j|^2\gamma^{-1}$ and consequently 
$\sum_j \beta_j^2<\infty$,
which means that $B\in{\rm C}^2(\h)$. Similarly (or from (\ref{50}), since $F\in{\rm C}^2(\h)$)
we see that $A\in{\rm C}^2(\h)$.

By considering the polar decomposition of $S$ of the form $S=RU$, where $R$ is positive and
$U$ is unitary, we may rewrite (\ref{50}) in the form
\begin{equation}\label{52}F+A=-RCR^{-1},\end{equation}
where $C:=UBU^*\geq0$. Assume for a moment that in some orthonormal basis of $\h$ the operator 
$R$ can be represented by a diagonal matrix
and let $[\alpha_{i,j}]$, $[\phi_{i,j}]$ and
$[\gamma_{i,j}]$ be the matrices of $A$, $F$ and $C$ (respectively) in this basis. Then, considering
the sums of diagonal terms of matrices, (\ref{52}) implies that
\begin{equation}\label{53}\sum_{j=1}^n\psi_{j,j}+\sum_{j=1}^n\alpha_{j,j}=-\sum_{j=1}^n\gamma_{j,j}.
\end{equation}
Letting $n\to\infty$, the first sum in (\ref{53}) tends to $\tr(F)=\tr(T)\in\bc\setminus(0,\infty)$,
while the second and the third sums converge to elements in $[0,\infty]$. This shows that
the equality (\ref{53}) can hold for all $n$ only if $\tr(T)=0$ and $\psi_{j,j}=0=\alpha_{j,j}$
for all $j$. Since $A\in\bh^+$, the condition $\alpha_{j,j}=0$ for all $j$ implies that $A=0$.
But then $B$ is similar to $T$, hence $\tr(B)=0$, which implies (since $B\geq0$) that $B=0$. 
In this case $T=0$, which was excluded by the hypothesis of the proposition. Now we
will show by an approximation argument that (\ref{52}) leads to a contradiction even if $R$ can not be diagonalized.

By the Weyl - von Neumann theorem \cite[p. 214]{Co2}, given $\varepsilon>0$, there exist a diagonal
hermitian operator $D$ and an operator $H\in{\rm C}^2(\h)$ with $\|H\|_2<\varepsilon$ 
(where $\|\cdot\|_2$ denotes the Hilbert - Schmidt norm) such that $R=D+H$. If $\varepsilon$ is
small enough then $D$ is invertible (since $D=R-H=R(1-R^{-1}H)$) and
$$\|D^{-1}\|\leq\|R^{-1}\|\sum_{n=0}^{\infty}\|R^{-1}H\|^n\leq\frac{\|R^{-1}\|}{1-\varepsilon\|R^{-1}\|}.$$
Further, if $\varepsilon$ is small enough then $1+HD^{-1}$ is invertible and
$$RCR^{-1}=(1+HD^{-1})DCD^{-1}(1+HD^{-1})^{-1}.$$
Since $(1+HD^{-1})^{-1}=1-HD^{-1}(1+HD^{-1})^{-1}$, we may write
\begin{multline*}RCR^{-1}
=DCD^{-1}-DCD^{-1}HD^{-1}(1+HD^{-1})^{-1}\\+HCD^{-1}\left[1-HD^{-1}(1+HD^{-1})^{-1}\right],
\end{multline*}
hence (since $B$ and therefore also $C$ is in ${\rm C}^2(\h)$ by the first paragraph of this proof)
\begin{multline*}\|RCR^{-1}-DCD^{-1}\|_1 \leq\|H\|_2\|C\|_2\|D^{-1}\|\cdot\\
\left[\|D\|\|D^{-1}\|\|(1+HD^{-1})^{-1}\|+
\|\|1-HD^{-1}(1+HD^{-1})^{-1}\|\right].\end{multline*}
It follows that $\|RCR^{-1}-DCD^{-1}\|_1\to0$ as $\varepsilon\to0$. This allows us to conclude
in essentially the same way as in the previous paragraph (by considering the sums of diagonal entries of
matrices) that (\ref{52}) leads to a contradiction.
\end{proof}

For most of the above proof it would be sufficient if we assumed that $T\in{\rm C}^2(\h)$
(instead of $T\in{\rm C}^1(\h)$), but the problem is that for an operator $T$ not in ${\rm C}^1(\h)$
the sum of diagonal entries of its matrix relative to a general orthogonal basis can be quite
arbitrary (it need not even be defined \cite{FH}).

\medskip
{\bf Problem.} Which compact operators on an infinite dimensional Hilbert space
can be written as $T_1+T_2$, where $T_1$ and $T_2$ are similar to positive operators? 

\medskip
Theorem \ref{th3} implies that all operators can be approximated in norm by sums
of two operators similar to positive ones; but concerning such approximation a much stronger
result holds: it follows from \cite[Theorem 3.10]{DM} that both summands can be 
taken to be similar to the same positive operator.

\section{On spectra of L\" uders operators}

For two commutative $m$-tuples $(A_j)$ and $(B_j)$ of elements of $\bh$ the
spectrum $\sigma(\Phi)$ of the map $\Phi(X):=\sum_{j=1}^mA_jXB_j$ on $\bh$ 
can be described in terms of spectra of $(A_j)$ and $(B_j)$ (\cite{Cur}, \cite{N}); 
in particular $\sigma(\Phi)\subseteq\br^+$ if $A_j,B_j\in\bh^+$.
For noncommutative $(A_j)$ and $(B_j)$ the situation may be completely different. One consequence of Theorem \ref{th2} is that for an infinite dimensional Hilbert space
$\h$ the spectra of L\" uders operators on $\bh$ are not necessarily contained in $\br^+$.

\begin{pr}\label{pr5}Let $\h$ be an infinite dimensional Hilbert space. Every complex 
number $\lambda$ can be an eigenvalue of a L\" uders operator on $\bh$ of length $3$ (or more).
\end{pr}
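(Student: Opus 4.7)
My strategy is to translate the factorization statement of Corollary \ref{co4} into an eigenvalue statement via a $2\times 2$ amplification; the analytic work is already packaged in that corollary.

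First I would write $\h = \k \oplus \k$ with $\k$ infinite-dimensional and identify $\bh$ with $2 \times 2$ operator matrices over $\bk$. Applying Corollary \ref{co4} inside $\bk$ to the scalar operator $\lambda I_{\k}$ produces $P_j, Q_j \in \bk^+$ ($j = 1, 2, 3$) with $\sum_{j=1}^{3} P_j Q_j = \lambda I_{\k}$.

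Next I would set $A_j := P_j \oplus Q_j \in \bh^+$ and consider the L\"uders operator $\Psi(X) := \sum_{j=1}^{3} A_j X A_j$. Testing $\Psi$ on the nonzero off-diagonal matrix unit
\begin{equation*}
X_0 := \left[\begin{array}{cc} 0 & I_{\k} \\ 0 & 0 \end{array}\right] \in \bh,
\end{equation*}
a direct block calculation gives $A_j X_0 A_j = \left[\begin{array}{cc} 0 & P_j Q_j \\ 0 & 0 \end{array}\right]$, so summing over $j$ yields $\Psi(X_0) = \lambda X_0$, and $\lambda$ is realized as an eigenvalue of a length-$3$ L\"uders operator.

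For length $n > 3$ I would choose positive scalars $\alpha_4, \dots, \alpha_n > 0$, apply the construction above to $\lambda - \sum_{j=4}^{n} \alpha_j^2$ in place of $\lambda$, and append the extra coefficients $\alpha_j(I_{\k} \oplus I_{\k})$ for $j = 4, \dots, n$; these terms act on $X_0$ as multiplication by $\sum_{j \geq 4} \alpha_j^2$, restoring the total eigenvalue to $\lambda$. No serious obstacle is anticipated: Corollary \ref{co4} supplies the entire analytic content, and the amplification is routine. The only point to verify is that $\k$ itself is an infinite-dimensional separable Hilbert space, which is immediate by construction, so that Corollary \ref{co4} applies inside $\bk$ verbatim.
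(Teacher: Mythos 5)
Your proposal is correct and is essentially identical to the paper's own proof: both decompose $\h=\k\oplus\k$, apply Corollary \ref{co4} to write $\lambda$ as $\sum_{j=1}^3 P_jQ_j$ with positive coefficients in $\bk$, and check that the off-diagonal matrix unit $X_0$ is an eigenvector of the L\"uders operator with block-diagonal coefficients $P_j\oplus Q_j$. Your explicit handling of lengths greater than $3$ by appending positive scalar coefficients is a fine (and slightly more detailed) way to cover the ``or more'' clause, which the paper leaves implicit.
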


\begin{proof}Decompose $\h$ as $\h=\k\oplus\k$. By Corollary \ref{co4} there exist
$A_j,B_j\in\bk^+$ such that $\sum_{j=1}^3A_jB_j=\lambda$. By a simple calculation this 
implies that the operator
$$X_0:=\left[\begin{array}{cc}
0&1\\
0&0\end{array}\right]$$
is an eigenvector corresponding to the eigenvalue $\lambda$ of the L\" uders operator 
$\Phi$ on $\bh$ defined by $\Phi(X)=\sum_{j=1}^3T_jXT_j$, where
$$T_j=\left[\begin{array}{cc}
A_j&0\\
0&B_j\end{array}\right].$$
\end{proof}

\begin{theorem}\label{th6}Suppose that $A_j,B_j\in\bh^+$ ($j=1,2$) and let $\Phi$ be the
map on $\bh$ defined by $\Phi(X)=\sum_{j=1}^2A_jXB_j.$ If $A_1A_2=A_2A_1$ (or if
$B_1B_2=B_2B_1$) then the spectrum of $\Phi$ is contained in $\br^+$.
\end{theorem}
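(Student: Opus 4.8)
The plan is to handle the case $A_1A_2=A_2A_1$ directly and reduce the case $B_1B_2=B_2B_1$ to it by a transpose symmetry. For the reduction, fix an orthonormal basis of $\h$ and let $\tau(X)=X^t$ be the associated transpose. Then $\tau$ is a linear, isometric, anti-multiplicative bijection of $\bh$ that preserves positivity and satisfies $\tau\Phi\tau^{-1}(Y)=B_1^tYA_1^t+B_2^tYA_2^t$. Since $\tau$ is a similarity, $\sigma(\Phi)=\sigma(\tau\Phi\tau^{-1})$, and because $\tau$ reverses products, the left coefficients $B_1^t,B_2^t$ of $\tau\Phi\tau^{-1}$ commute exactly when $B_1,B_2$ do. Thus it suffices to prove the theorem under the assumption that the two \emph{left} coefficients commute.

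So assume $A_1A_2=A_2A_1$ and let $E$ be the joint spectral measure of the commuting positive operators $A_1,A_2$, supported on the joint spectrum $\Omega\subseteq\br^+\times\br^+$, with $A_1=\int s\,dE(s,t)$ and $A_2=\int t\,dE(s,t)$. I would partition $\Omega$ into Borel sets $\Delta_1,\dots,\Delta_N$ of diameter at most $\delta$, set $E_k:=E(\Delta_k)$ (orthogonal projections summing to $1$ and commuting with $A_1,A_2$), pick $(s_k,t_k)\in\Delta_k$, and put $P_k:=s_kB_1+t_kB_2\in\bh^+$ and $\Phi_0(X):=\sum_k E_kXP_k$. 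Writing $\Phi(X)-\Phi_0(X)=\sum_k E_k(A_1-s_k)XB_1+\sum_k E_k(A_2-t_k)XB_2$ and using that the factors $XB_1,XB_2$ do not depend on $k$, one factors out the block-diagonal operators $D_1:=\sum_k(A_1-s_k)E_k$ and $D_2:=\sum_k(A_2-t_k)E_k$, each of norm at most $\delta$, to obtain $\Phi(X)-\Phi_0(X)=D_1XB_1+D_2XB_2$ and hence $\|\Phi-\Phi_0\|\le\delta(\|B_1\|+\|B_2\|)\to0$ as the mesh shrinks.

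The decisive feature of $\Phi_0$ is that the horizontal-strip decomposition $\bh=\bigoplus_k E_k\bh$ is $\Phi_0$-invariant and $\Phi_0$ acts on each summand $E_k\bh$ as right multiplication by the positive operator $P_k$. Right multiplication by $P_k$ has spectrum $\sigma(P_k)\subseteq\br^+$, its resolvent at $\lambda$ being right multiplication by $(P_k-\lambda)^{-1}$ (which preserves $E_k\bh$). Consequently, for every $\lambda\notin\br^+$ the operator $\Phi_0-\lambda$ is invertible with $\|(\Phi_0-\lambda)^{-1}\|=\sup_k\|(P_k-\lambda)^{-1}\|\le\mathrm{dist}(\lambda,\br^+)^{-1}$, a bound \emph{independent of the partition}. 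Combining this with $\|\Phi-\Phi_0\|\to0$: once $\|\Phi-\Phi_0\|<\mathrm{dist}(\lambda,\br^+)$, a Neumann-series argument shows $\Phi-\lambda=(\Phi_0-\lambda)\bigl(1+(\Phi_0-\lambda)^{-1}(\Phi-\Phi_0)\bigr)$ is invertible, so $\lambda\notin\sigma(\Phi)$. As $\lambda\notin\br^+$ was arbitrary, $\sigma(\Phi)\subseteq\br^+$.

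The main obstacle is obtaining the uniform-in-partition resolvent estimate. The naive termwise inverse $\sum_k E_kY(P_k-\lambda)^{-1}$ need not be uniformly bounded as $N\to\infty$; the remedy is to not invert term by term but to recognize $\Phi_0$ as an orthogonal direct sum of right multiplications, whose resolvent norm is the \emph{supremum} of the $\|(P_k-\lambda)^{-1}\|$ and is therefore controlled by $\mathrm{dist}(\lambda,\br^+)^{-1}$ uniformly. The other point needing care is the standard fact that a norm limit of operators whose spectra lie in $\br^+$ and whose resolvents off $\br^+$ are uniformly bounded again has spectrum in $\br^+$; here the uniform bound supplies exactly the stability that mere upper semicontinuity of the spectrum would not.
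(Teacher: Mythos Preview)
Your approach is genuinely different from the paper's (which passes to $\ell_\infty(\bh)/c_0(\bh)$ to turn approximate eigenvalues into eigenvalues, then uses Voiculescu's Weyl--von Neumann--Berg theorem and a trace argument), and the reduction via the transpose, the operator-norm approximation $\|\Phi-\Phi_0\|\le\delta(\|B_1\|+\|B_2\|)$, and the observation that $\Phi_0$ leaves each $E_k\bh$ invariant acting there as right multiplication by $P_k$ are all correct and pleasant. The difficulty is exactly where you locate it: the uniform resolvent bound.

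Your proposed remedy does not work. Recognising $\Phi_0$ as a direct sum of right multiplications on $\bh=\bigoplus_k E_k\bh$ does not give $\|(\Phi_0-\lambda)^{-1}\|=\sup_k\|(P_k-\lambda)^{-1}\|$, because this Banach-space decomposition is \emph{not} an $\ell^\infty$-direct sum in the operator norm; the ``termwise inverse'' $Y\mapsto\sum_k E_kY(P_k-\lambda)^{-1}$ \emph{is} the resolvent, so if the former is not controlled by $\sup_k\|(P_k-\lambda)^{-1}\|$, neither is the latter. Here is a concrete counterexample in your own framework. Take $\h=\bc^2$, $E_k=e_ke_k^*$, $\lambda=i$, $P_1=0$, and $P_2=tQ$ with $t$ large and $Q$ the rank-one projection onto $(e_1+e_2)/\sqrt2$ (this is of the form $P_k=s_kB_1+t_kB_2$ with $B_1=0$, $B_2=Q$). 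Then $R_1:=(P_1-i)^{-1}=iI$ and, for large $t$, $R_2:=(P_2-i)^{-1}\approx iQ^\perp$; both have norm $1=\operatorname{dist}(i,\br^+)^{-1}$. But
\[
(\Phi_0-i)^{-1}(I)=E_1R_1+E_2R_2\approx i\begin{pmatrix}1&0\\-\tfrac12&\tfrac12\end{pmatrix},
\]
whose norm is $\sqrt{(3+\sqrt5)/4}\approx 1.14>1$. Thus $\|(\Phi_0-\lambda)^{-1}\|>\sup_k\|(P_k-\lambda)^{-1}\|$, and your claimed partition-independent bound $\operatorname{dist}(\lambda,\br^+)^{-1}$ fails. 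Without a uniform resolvent estimate the Neumann-series step cannot be carried out, so the argument as written does not close. (For each \emph{fixed} partition $\sigma(\Phi_0)\subset\br^+$ is fine, since the finite direct sum inverse exists; what is missing is precisely the uniformity in the mesh, and nothing short of that will transfer the conclusion to $\Phi$.)
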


\begin{proof}Since boundary points of the spectrum of any operator 
are approximate eigenvalues \cite{Co}, it suffices to show that each approximate eigenvalue
$\lambda$ of $\Phi$ is in $\br^+$. By considering the space $\mathcal{B}:=\ell_{\infty}(\bh)/c_0(\bh)$,
where $\ell_{\infty}(\bh)$ is the space of all bounded sequences with the entries in $\bh$
and $c_0(\bh)$ is the subspace of all sequences converging (in norm) to $0$, we may reduce
the approximate eigenvalues of $\Phi$ to proper eigenvalues of the corresponding operator
$\tilde{\Phi}$ on $\mathcal{B}$. Here of course $\tilde{\Phi}$ is defined by $\tilde{\Phi}([X_n])=
[\Phi(X_n)]$, where $[X_n]$ denotes the coset of a sequence $(X_n)\in\ell_{\infty}(\bh)$.
Note that $\tilde{\Phi}$ is again an elementary operator, namely of the form
\begin{equation}\label{31}\tilde{\Phi}(Y)=\sum_{j=1}^2\tilde{A}_jY\tilde{B}_j\ \ \ (Y\in B),
\end{equation} where $\tilde{A}$ denotes the coset in $\mathcal{B}$ of the constant sequence 
$(A,A,\ldots)\in\ell_{\infty}(\bh)$ for each $A\in\bh$.
Since $\mathcal{B}$ is a $C^*$-algebra, we can regard it as a subalgebra of $\bk$ for some 
(non-separable) Hilbert space $\k$ and by the formula (\ref{31}) we may regard the map 
$\tilde{\Phi}$ to be defined on all $\bk$. Any approximate eigenvalue $\lambda$ of $\Phi$ is
then an eigenvalue of $\tilde{\Phi}$. Choose a nonzero eigenvector $Y$
corresponding to $\lambda$. $\k$ is not 
separable, but it can be expressed as an orthogonal sum of separable subspaces $\k_i$
that reduce all the operators $A_j$, $B_j$ and $Y$. If $i$ is such that $Y|\k_i\ne0$,
then $\lambda$ is an eigenvalue of the operator $\Psi$ on ${\rm B}(\k_i)$ defined by
$\Psi(X)=\sum_{j=1}^2C_jXD_j$, where $C_j=A_j|\k_i$ and $D_j=B_j|\k_i$. So it suffices
to show that all eigenvalues of such operators are in $\br^+$. Thus, (adapting the notation)
we may assume that $\lambda$ is an eigenvalue of $\Phi$. Denote by $X$ a corresponding
eigenvector with $\|X\|=1$, hence
\begin{equation}\label{32}\sum_{j=1}^2A_jXB_j=\lambda X.\end{equation}

Suppose that $A_1$ and $A_2$ commute. Then by Voiculescu's version \cite{Voi}  of the 
Weyl-von Neumann-Berg theorem, given $\varepsilon
>0$, there exist commuting diagonal hermitian operators $C_j\in\bh$ and Hilbert-Schmidt 
operators  $H_j\in {\rm C^2(\h)}$ such that $A_j=C_j+H_j$ and $\|H_j\|_2<\varepsilon$
($j=1,2$). Let $C_j=C_j^+-C_j^-$ be the decomposition of $C$ into the positive and the negative part and
denote by $Q_j$ the range projection of $C_j^-$. Then $A_j+C_j^-=C_j^++H_j$, hence
(since $Q_jC_j^+=0$ and $Q_jC_j^-=C_j$)
$$Q_jA_jQ_j+C_j^-=Q_jH_jQ_j\in{\rm C^2(\h)}.$$
This implies that $C_j^-\in{\rm C^2(\h)}$ and $\|C_j^-\|_2\leq\|H_j\|_2<\varepsilon$.
So, replacing $C_j$ by $C_j^+$ and $H_j$ by $H_j-C_j^-$ (and the initial $\varepsilon$
by $\varepsilon/2$), we may assume that $C_j\geq0$. Let $P$ be any finite rank projection
that commutes with $C_1$ and $C_2$. (Note that, since $C_1$, $C_2$ are commuting diagonal
operators, there exist a net of such projections $P$ converging strongly to the identity.)
From (\ref{32}) we have that $\sum PA_jXB_jX^*P=\lambda PXX^*P$, hence applying the trace
$\tr$ we obtain
\begin{equation}\label{33}\sum_{j=1}^2\left(\tr(PC_jXB_jX^*P)+\tr(PH_jXB_jX^*P)\right)=
\lambda\tr(PXX^*P).\end{equation}
Since $P$ commutes with $C_j$, 
\begin{equation}\label{40}tr(PC_jXB_jX^*P)=\tr(C_jPXB_jX^*P)
=\tr(C_j^{1/2}PXB_jX^*PC_j^{1/2})\geq0.\end{equation}
Further (since $\|Z\|_2=\|Z^*\|_2$ for all $Z\in\bh$) ,
\begin{equation}\label{34}|\tr(PH_jXB_jX^*P|\leq\|H_j\|_2\|XB_jX^*P\|_2=\|H_j\|_2
\|PXB_jX^*\|_2<\varepsilon\|PX\|_2,\end{equation}
where we have assumed (without lost of generality) that $\|B_j\|\leq1$.
If $P$ is sufficiently close to $1$ so that $PX\ne0$, then from (\ref{33}) and
(\ref{34}) we have that
$$\begin{array}{lll}
\left|\lambda-\sum_{j=1}^2\frac{\tr(PC_jXB_jX^*P)}{\tr(PXX^*P)}\right|&\leq&\varepsilon
\sum_{j=1}^2\frac{\|PX\|_2}{\tr(PXX^*P)}\\
 &=&\frac{2\varepsilon}{\|PX\|_2}.\end{array}$$
Letting  in this estimate $P\to 1$, $\varepsilon\to0$ and using (\ref{40}), we see
that $\lambda\geq0$.
\end{proof}

\begin{re}Theorem \ref{th6} can be extended to operators of the form \begin{equation}\label{99}
X\mapsto
\sum_{j=1}^nA_jXB_j\end{equation} if the coefficients on one side, say all the $A_j$, are smooth
nonnegative functions $A_j=f_j(H_1,H_2)$ of a pair of commuting hermitian operators $(H_1,H_2)$. 
Namely, in this case it can be shown (using the Fourier transform) that
small Hilbert-Schmidt perturbations of $(H_1,H_2)$ result in small Hilbert-Schmidt perturbations
of $f_j(H_1,H_2)$. The author does not know if the theorem can be extended to
the general situation, when all the $A_j$ commute, but the $B_j$ do not necessarily commute.
\end{re}

{\bf Problems.} 1. Can Theorem \ref{th6} be generalized to operators of length greater than $2$?

\medskip 2. Suppose that all $A_j,B_j$ are positive and for each $j$ at least one of
$A_j, B_j$ is compact. Then it can be deduced from \cite[Corollary 6.6]{Sh} 
(see \cite{M}) that all  eigenvalues of the operator 
(\ref{99}) are contained in $\br^+$. Is the same true for the entire spectrum?

3. Can in Theorem \ref{th6} the commutativity condition be replaced by commutativity
modulo compact operators?

\end{document}